\newif\ifarxiv
\newenvironment{lenumerate}[2][]
{\begin{enumerate}[label=(#2\arabic*),leftmargin=0.2in,itemindent=0.15in,#1]}
{\end{enumerate}}
\setlist*[enumerate,1]{label={\itshape\arabic*)}}
\newcommand{\paragraphswithstop}{%
\let\copyparagraph\paragraph%
\renewcommand\paragraph[1]{\copyparagraph{##1.}}%
}
\def\namedlabel#1#2{\begingroup
  #2%
  \def\@currentlabel{#2}%
  \phantomsection\label{#1}\endgroup
}
\newsavebox{\boxifnotempty}
\newcommand{\displayifnotempty}[3]{\sbox\boxifnotempty{#2}\setbox0=\hbox{\usebox{\boxifnotempty}\unskip}%
\ifdim\wd0=0pt
\else
 #1\usebox{\boxifnotempty}#3%
\fi%
}
\newcommand{\ifempty}[2]{\setbox0=\hbox{#1\unskip}%
\ifdim\wd0=0pt%
 #2%
\fi%
}
\newcommand{\ifnotempty}[2]{\setbox0=\hbox{#1\unskip}%
\ifdim\wd0>0pt%
 #2%
\fi%
}
\newcommand{\switchifempty}[3]{\sbox\boxifnotempty{#1}\setbox0=\hbox{\usebox{\boxifnotempty}\unskip}%
  \ifdim\wd0=0pt{}%
  #2%
  \else{}%
  #3%
  \usebox{\boxifnotempty}%
  \fi%
}
\newcommand*\newstoreddef[1]{
  \BeforeClosingMainAux{%
    \immediate\write\@auxout{%
      \string\restoredef{#1}{\csname #1\endcsname}%
    }%
  }%
}
\newcommand*{\restoredef}[2]{% used at the aux file
  \expandafter\gdef\csname stored@#1\endcsname{#2}%
}
\newcommand*{\storeddef}[1]{
  \@ifundefined{stored@#1}{0}{\csname stored@#1\endcsname}%
}
\newcommand{\real}[1]{\mathbb{R}^{#1}{}}
\newcommand{\bmat}[1]{\begin{bmatrix}#1\end{bmatrix}}
\newcommand{\transpose}{^\mathrm{T}}
\newcommand{\inverse}{^{-1}}
\newcommand{\defeq}{\doteq}
\DeclarePairedDelimiter{\norm}{\lVert}{\rVert}
\newcommand{\vct}[1]{\mathbf{#1}}
\DeclareMathOperator{\blkdiag}{blkdiag}
\DeclareMathOperator{\trace}{tr}
\DeclareMathOperator{\stack}{stack}
\DeclareMathOperator{\expect}{\mathbb{E}}
\newcommand{\subjectto}{\textrm{subject to}\;}
\providecommand{\vy}{\vct{y}}
\providecommand{\vA}{\vct{A}}
\providecommand{\vB}{\vct{B}}
\providecommand{\vK}{\vct{K}}
\providecommand{\vY}{\vct{Y}}
\providecommand{\vtheta}{\bm{\theta}}
\providecommand{\vSigma}{\bm{\Sigma}}
\providecommand{\cC}{\mathcal{C}}
\providecommand{\cK}{\mathcal{K}}
\providecommand{\cN}{\mathcal{N}}
\providecommand{\cX}{\mathcal{X}}
  \newcommand{\newcolorlabel}[2]{%
  \expandafter\newcommand\csname #1\endcsname[1]{%
    \tikz[baseline]{\node[text=white,fill=#2,anchor=base,text height=1.3ex,text depth=0.1ex,font=\sffamily\bfseries]{##1}}}%
}
\newcommand{\newcommenter}[2]{%
  \expandafter\newcommand\csname #1\endcsname[1]{%
    \fcolorbox{#2}{#2}{\color{white}\textsf{\textbf{#1}}}
    {\color{#2}##1}}%
  % comment to mention commenter
  \expandafter\newcommand\csname at#1\endcsname{%
    \fcolorbox{#2}{#2}{\color{white}\textsf{\textbf{@#1}}}
    {\color{#2}}}%
  % citation placeholder
  \expandafter\newcommand\csname #1cite\endcsname[1]{%
    \csname #1\endcsname {[##1]}
  }%
  % internal reference placeholder
  \expandafter\newcommand\csname #1ref\endcsname[1]{%
    \csname #1\endcsname {$\blacktriangleright$##1}
  }%
  % comment to highlight
  \expandafter\newcommand\csname #1hl\endcsname[2]{%
    \colorbox{#2}{\color{white}\textsf{\textbf{#1}}}\sethlcolor{Azure2}\hl{##2}~%
    \expandafter\ifx\csname commentarrow\endcsname\relax$\leftarrow$\else \commentarrow[#2]\fi~%
    {\color{#2}##1}}%
  % comment to strikeout
  \expandafter\newcommand\csname #1st\endcsname[2]{%
    \colorbox{#2}{\color{white}\textsf{\textbf{#1}}}\sout{##2}~%
    \expandafter\ifx\csname commentarrow\endcsname\relax$\leftarrow$\else \commentarrow[#2]\fi~%
    {\color{#2}##1}}%
}
\tikzset{
  dim above/.style={to path={\pgfextra{
        \pgfinterruptpath
        \draw[>=latex,|->|] let
        \p1=($(\tikztostart)!1.5em!90:(\tikztotarget)$),
        \p2=($(\tikztotarget)!1.5em!-90:(\tikztostart)$)
        in(\p1) -- (\p2) node[pos=.5,sloped,above]{#1};
        \endpgfinterruptpath
      }
    }
  },
  dim double above/.style={to path={\pgfextra{
        \pgfinterruptpath
        \draw[>=latex,|->|] let
        \p1=($(\tikztostart)!3em!90:(\tikztotarget)$),
        \p2=($(\tikztotarget)!3em!-90:(\tikztostart)$)
        in(\p1) -- (\p2) node[pos=.5,sloped,above]{#1};
        \endpgfinterruptpath
      }
    }
  },
  dim below/.style={to path={\pgfextra{
        \pgfinterruptpath
        \draw[>=latex,|->|] let 
        \p1=($(\tikztostart)!-1em!-90:(\tikztotarget)$),
        \p2=($(\tikztotarget)!-1em!90:(\tikztostart)$)
        in (\p1) -- (\p2) node[pos=.5,sloped,below]{#1};
        \endpgfinterruptpath
      }
    }
  },
}
\tikzset{
    right angle quadrant/.code={
        \pgfmathsetmacro\quadranta{{1,1,-1,-1}[#1-1]}     % Arrays for selecting quadrant
        \pgfmathsetmacro\quadrantb{{1,-1,-1,1}[#1-1]}},
    right angle quadrant=1, % Make sure it is set, even if not called explicitly
    right angle length/.code={\def\rightanglelength{#1}},   % Length of symbol
    right angle length=2ex, % Make sure it is set...
    right angle symbol/.style n args={3}{
        insert path={
            let \p0 = ($(#1)!(#3)!(#2)$) in     % Intersection
                let \p1 = ($(\p0)!\quadranta*\rightanglelength!(#3)$), % Point on base line
                \p2 = ($(\p0)!\quadrantb*\rightanglelength!(#2)$) in % Point on perpendicular line
                let \p3 = ($(\p1)+(\p2)-(\p0)$) in  % Corner point of symbol
            (\p1) -- (\p3) -- (\p2)
        }
    }
}
\newcommand{\pgfextractangle}[3]{%
    \pgfmathanglebetweenpoints{\pgfpointanchor{#2}{center}}
                              {\pgfpointanchor{#3}{center}}
    \global\let#1\pgfmathresult  
}
\newcommand{\commentarrow}[1][Azure4]{\tikz[baseline=-3pt]{\node[shape border uses incircle, fill=#1,rotate=180,single arrow, inner sep=1pt, minimum size=6pt, single arrow head extend=2pt]{};}}
\tikzset{ax/.style={-latex,line width=2pt}}
\tikzset{camera/.style={fill=Sienna1,fill opacity=0.5},%
image plane/.style={draw=RoyalBlue3,line width=2pt}}
\newcommand{\astar}{$\mathtt{A^*}$}
\title{\LARGE \bf
  \ifarxiv
    Technical Report on Optimal Linear Multiple Estimation for Landmark-Based Planning via Control Synthesis
  \else
    Optimal Linear Multiple Estimation\\for Landmark-Based Planning via Control Synthesis
  \fi
}
\author{Chenfei Wang and Roberto Tron% <-this % stops a space
  % \thanks{*This work was not supported by any organization}% <-this % stops a space
  \thanks{The authors are with the Department of Mechanical Engineering,
    Boston, MA, 02215 USA. Email:
    {\tt\small \{wang1029,tron\}@bu.edu}.}%
  \thanks{This work was partially supported by ONR MURI award N00014-19-1-2571.}%
}
\begin{document}

\maketitle
\thispagestyle{empty}
\pagestyle{empty}

%%%%%%%%%%%%%%%%%%%%%%%%%%%%%%%%%%%%%%%%%%%%%%%%%%%%%%%%%%%%%%%%%%%%%%%%%%%%%%%%
\begin{abstract}
  A common way to implement navigation in mobile robots is through the use of landmarks. In this case, the main goal of the controller is to make progress toward a goal location (stability), while avoiding the boundary of the environment (safety). In our previous work, we proposed a method to synthesize global controllers for environments with a polyhedral decomposition; our solution uses a Quadratically Constrained Quadratic Program with Chance Constraints to take into account the uncertainty in landmark measurements. %However, since the controllers are synthesized at planning time, the Chance Constraints are necessarily based on the worst-case uncertainty bounds (covariance matrices).
  Building upon this work, we introduce the concept of virtual landmarks, which are defined as linear combinations of actual landmark measurements that minimize the uncertainty in the resulting control actions. Interestingly, our results show that the first minimum-variance landmark is independent of the feedback control matrix, thus decoupling the design of the landmark from the one of the controller; attempting to derive additional, statistically independent landmarks, however, requires solving non-convex problems that involve also the controller.
  Numerical experiments demonstrate that, by minimizing the variance of the inputs, the resulting controller will be less conservative, and the robot is able to complete navigation tasks more effectively (faster and with less jitter in the trajectory).
\end{abstract}

%%%%%%%%%%%%%%%%%%%%%%%%%%%%%%%%%%%%%%%%%%%%%%%%%%%%%%%%%%%%%%%%%%%%%%%%%%%%%%%%
\section{INTRODUCTION}
Motion planning is a fundamental problem for mobile robotics. A basic approach \cite{lavalle2006planning, schurmann2009computational,choset2005principles} consists of abstracting the environment into a graph (e.g., via a cell decomposition or random sampling), and then dividing the planning problem into a high-level planning on the vertices of the graph (i.e., sequences of regions) followed by a low-level feedback controller for transitions (between regions). %In this type of approach, the environment is partitioned into small cells which are nodes of a graph in high-level planning. For each small zone, well-designed controllers are responsible for guiding the robot running toward the next stage, satisfying the time and space constraints. Thus, the robot will switch between these small zones one by one until arriving at the end-stage or exit point.
In this setting, the high-level planning component is relatively simple, and can be completed, for instance, with the celebrated \astar{} algorithm. The low-level controller, however, needs to balance multiple requirements and challenges: making progress toward the next step of the high-level plan toward a goal (stability), avoiding the environment boundary (safety), reduce the effect of noise in the measurements (robustness to noise). %However, in practice, since the measurements of the robot are affected by noise, also the feedback control computed from them contains some form of uncertainty. %Thus, a core problem for controller design is the safety under noisy measurement which, in general, indicates a conservative control policy.
One way to address this set of requirements is by focusing on linear feedback laws, and perform a worst-case analysis to synthesize controllers that are stable and safe for given probabilistic bounds on the noise in the system, as done in our previous work \cite{ccbf}.
% For instance, in our previous work \cite{ccbf}, this idea was implemented under the assumption of Gaussian measurement noise with bounded covariance, and chance constraints based on a user-defined risk (in the form of a probability threshold).
This approach results in a minimal amount of computations at runtime (which simply involves applying the pre-computed linear controller), but, by its nature, will result in conservative control actions.

In this paper, however, we make the following observations:
\begin{lenumerate}{O}
\item\label{it:obserbation_first} We can shape the uncertainty of feedback control actions (to a certain degree) through different linear combinations of the inputs.
\item The probability of violating a safety constraint, i.e., moving toward a wall, depends on the uncertainty of the control along the normal to the wall (moving parallel to the wall does no increase the risk).
\item\label{it:obserbation_last} In practice, statistically consistent estimators (e.g., Kalman filters \cite{kalman}) include a covariance matrix for the uncertainty in the estimate at each time step; this information could be used in real time to reduce the conservatism of a worst-case analysis.
\end{lenumerate}

\subsection{Background and Previous work}
Stability and safety are crucial for control system. Control Lyapunov Functions (CLFs,~\cite{ames2014rapidly,sontag1989universal}) and Control Barrier Functions (CBFs,~\cite{c1,c1b}) have become a popular methods to create controllers that ensure, respectively, user-defined convergence guarantees and forward invariance of safety sets; this popularity is due to the flexibility they provide (they only require mild conditions on the differentiability of the barrier functions and can be applied to systems with high relative degree~\cite{galloway2015torque,xiao2021high,nguyen2016exponential}), and the fact that can be implemented relatively efficiently, as they generally result in quadratic programs (QPs).
While typically employed for real-time control, CLFs and CBFs have also been applied to offline path planning. In particular, the present paper follows the paradigm introduced by the work of \cite{Bahreinian:ACC21,Bahreinian:IROS21}, which uses linear CLF and CBF functions to design linear output feedback controllers that can guide the robot in a convex region, using high-level discrete path planning to determine a sequence of regions toward the goal. In our previous work \cite{WangC:ACC22}, we have extended the same paradigm to the case where the measurements are affected by Gaussian noise with known bounds on the covariance. In this case, the CLF and CBF constraints become stochastic quantities that can be transformed into \emph{chance CLF and CBF constraints} (ensuring that the original CLF and CBF constraints hold with a user-defined probability) using Chebyshev's bound; this formulation leads to solving Quadratic Constraint Quadratic Programings (QCQPs) instead of QPs.
Although the resulting controllers are a significant improvement with respect to other work that does not explicitly consider noise (e.g., \cite{conner2003composition,habets2006reachability,belta2005discrete}), they can still be overly conservative: for instance, \cite{WangC:ACC22} does not provide a way to incorporate better bounds if these become available at runtime.
% \rtron{Add references that were already cited in your previous paper and in Mahroo's paper}

The effect of noise (and other disturbances) can be also considered on the controls (i.e., the controller outputs a nominal $u$, but the system receives $u$ plus a disturbance $d$); this can be taken into account again by using a worst-case analysis leading to robust QP formulations \cite{dawson2022safe,choi2021robust,garg2021robust,jankovic2018robust,seiler2021control,nguyen2021robust} or by considering steady-state-errors~\cite{kolathaya2018input}. Such line of work, however, considers disturbances affecting the \emph{output} of the controller (Fig.~\ref{fig:deviation-control}), while we consider noise affecting the \emph{input} of the controller (Fig.~\ref{fig:deviation-measurements}); the two settings are somewhat complementary, and we will focus on the latter.

\begin{figure}
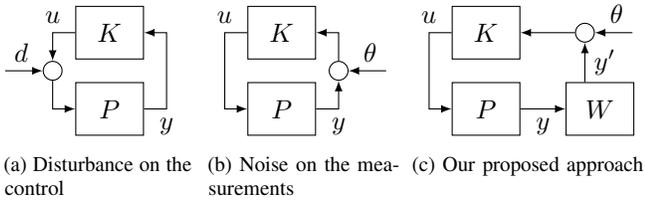

  \centering
  \subfloat[Disturbance on the control]{\begin{tikzpicture}
  \input{figure/tikz/feedback-common}
  \node[junction] (e) at ($(C out)!0.5!(P in)$) {};
  \begin{scope}[connection]
  \draw (P.east) -- (P out) -- (C in) -- (C.east);
  \draw (C.west) -- (C out) -- (e);
  \draw (e) -- (P in) -- (P.west);
  \draw ($(e.west)+(-5mm,0)$) node[anchor=south west] {$d$} -- (e);
\end{scope}
\end{tikzpicture}

%%% Local Variables:
%%% mode: latex
%%% TeX-master: "../../2022-acc-lpVirtualLandmark"
%%% End:\label{fig:deviation-control}}
  \hspace{1mm}
  \subfloat[Noise on the measurements]{\begin{tikzpicture}
  \input{figure/tikz/feedback-common}
  \node[junction] (e) at ($(P out)!0.5!(C in)$) {};
  \begin{scope}[connection]
    \draw (P.east) -- (P out) -- (e);
    \draw (e) -- (C in) -- (C.east);
    \draw (C.west) -- (C out) -- (P in) -- (P.west);
    \draw ($(e.east)+(5mm,0)$) node[anchor=south east] {$\theta$} -- (e);
  \end{scope}
\end{tikzpicture}

%%% Local Variables:
%%% mode: latex
%%% TeX-master: "../../2022-acc-lpVirtualLandmark"
%%% End:\label{fig:deviation-measurements}}
  \hspace{1mm}
  \subfloat[Our proposed approach]{\begin{tikzpicture}
  \input{figure/tikz/feedback-common}
  \node[feedback box,right=6mm of P] (W) {$W$};
  \node[junction] (e) at ($(C in-|W)+(-2mm,0)$) {};
  \begin{scope}[connection]
    \draw (P.east) -- (W.west);
    \draw (e) -- (C.east);
    \draw (C.west) -- (C out) -- (P in) -- (P.west);
    \draw ($(e.east)+(5mm,0)$) node[anchor=south east] {$\theta$} -- (e);
    \draw (e|-W.north) -- (e) node[pos=0.5, anchor=west]{$y'$};
  \end{scope}
\end{tikzpicture}

%%% Local Variables:
%%% mode: latex
%%% TeX-master: "../../2022-acc-lpVirtualLandmark"
%%% End:\label{fig:deviation-ours}}
  \caption{Block diagram of different types of disturbances in feedback loops between a plant $P$ and a controller $K$, and our proposed approach that uses a linear combination $W$ of the measurements $y$ to produce virtual landmarks $y'$.}
  \label{fig:blocks}
\end{figure}

\subsection{Paper Contributions}
% Two aspects:
% 1) minimize chance constraints
% 2) minimize variance of resulting controller

In this work, we build upon observations \ref{it:obserbation_first}--\ref{it:obserbation_last} to enhance our prior work \cite{ccbf}. We first discuss a novel way to use the linearity of the controllers to mix the different input landmark measurements (Section \ref{section2}).
Then, we show that we can reduce the conservatism of the chance CBF constraints by creating virtual landmarks that minimize the variance in the direction of the constraints (Sec.~\ref{section4}). We show that it is possible to generate at most $N_y$ uncorrelated virtual marks from $N_y$ original landmarks (Sec.~\ref{sec:LME}). Interestingly, our analysis shows that the first virtual landmark can be selected independently of the linear feedback controller and the safety constraints, while the subsequent ones require solving a joint non-convex problem over both the combination weights and the controller. The first virtual landmark can be computed by from the covariance matrices using simple matrix operations. Hence, we propose therefore the following strategy: offline, we use the known covariance bounds to merge all landmarks into a single virtual landmark (Fig.~\ref{fig:deviation-ours}), and synthesize controllers based on the tightened chance constraints; online, if updated covariance matrices are available, we can recompute the virtual landmark and update the controller with simple matrix operations. Both steps reduce the conservatism of the control actions.

We show via numerical simulations that the reduced conservatism translates in a final trajectory that converges faster and with less jitter.

% \subsection{Organization}
% The paper is organized as follows. We first review preliminaries and give definitions used in this paper in . Then we introduce the Linear Multiple Estimation (LME) in Section \ref{section3}.  talks about the chance CBF control based on virtual marks. Section  presents the numerical simulation and discusses experimental results. Finally, section \ref{section6} summarizes the result of this paper and discusses the future work. Some tedious mathematical inferences are given in the APPENDIX.

\section{Problem Formulation and Preliminaries}
\label{section2}
% This section provides the needed definitions, the problem formulation for landmark-based control, and an overview of the relevant backgrounds.

\subsection{Dynamical System}
We model the robot as a linear time-invariant system
\begin{equation}\label{dyn}
  \dot{x} =A x+B u,
\end{equation}
where $A \in \mathbb{R}^{n \times n}$, and $B \in \mathbb{R}^{n \times m}$ are assumed to form a stabilizable pair, $x\in \mathbb{R}^{n}$ is the state, and $u\in \mathbb{R}^m$ is the control input (the output measurement model is also linear and is discussed in \S\ref{sec:measurement model}).
Our formulation heavily relies on the linearity of model \eqref{dyn}. However, it can be applied to nonlinear robots by either linearizing the model (note that each environment cell can use a different linearization point) or by using a lower-level nonlinear controller (e.g., see \cite{Bahreinian:IROS21} for a practical demonstration of this approach).

\begin{figure}[b]
  \centering
  \includegraphics[scale=0.2]{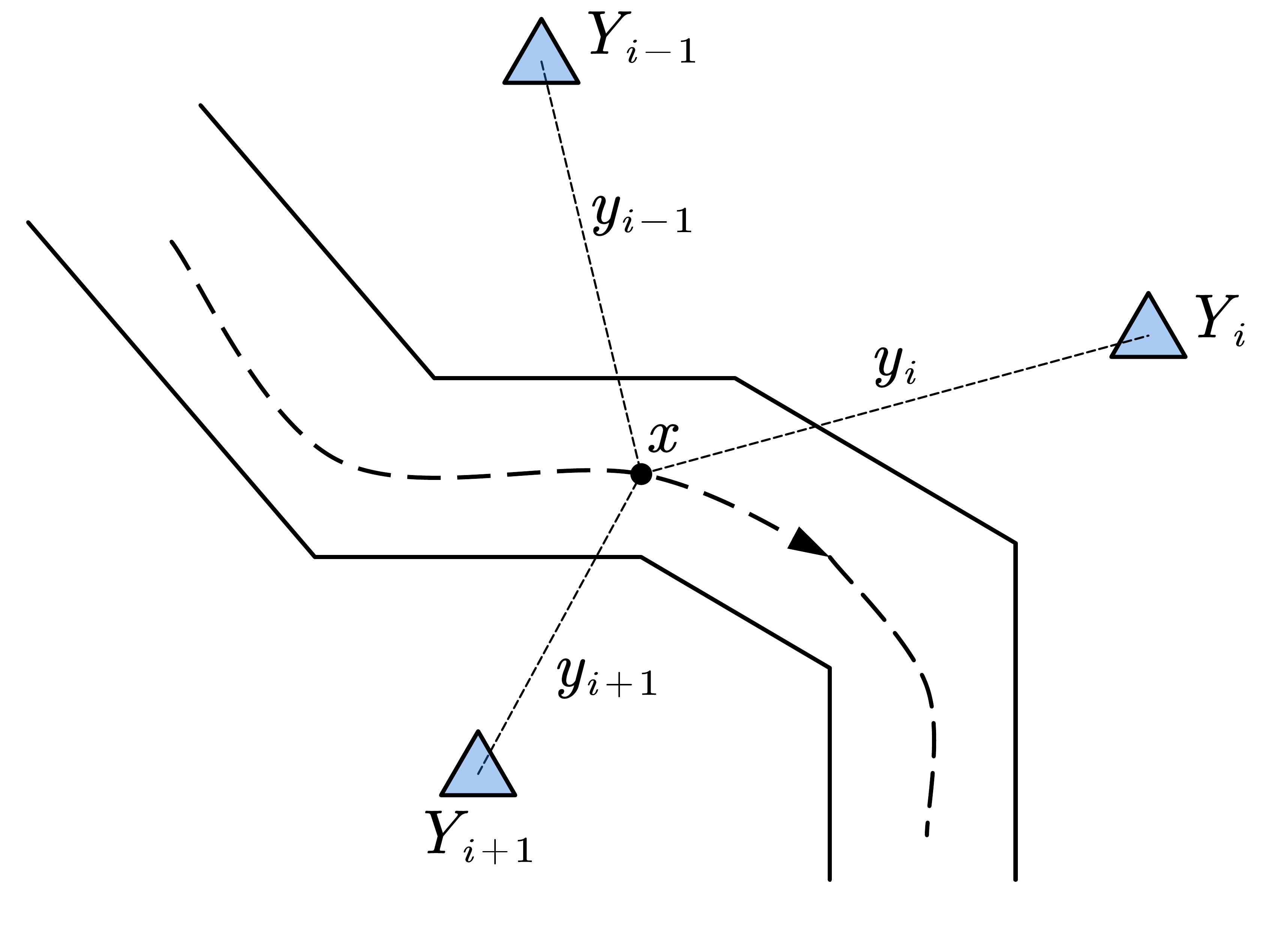}
  % \rtron{Add figure with virtual landmark, showing that it has lower variance}
  \caption{Landmark based robot navigation: the robot can measure the relative location of landmarks. And it utilizes this information to complete the control and navigation task.}\label{fig:1}
\end{figure}

\subsection{The Environment and Decomposition-Based Navigation}
It is assumed that the free space in the environment can be decomposed into convex cells (polytopes). As in~\cite{Bahreinian:ACC21}, we tackle the path planning problem by first executing a high level path planning strategy, which identifies a sequence of cells from the one that contains the starting location to the one that contains the goal location. This is typically implemented by first representing the cells and the possible motions between them as the vertices and edges of a graph, respectively, and then using a discrete path planning algorithm such as \astar. Then, we design controllers for each cell $\cX$ in the decomposition that use measurements with respect to fixed landmarks, as explained in the two sections below. Each cell $\cX$ is defined by a set of linear inequalities
\begin{equation}\label{eq:polytope}
  \cX=\{x\in\real{n}: A_xx+b_x\leq 0\},
\end{equation}
where each row of the system $A_xx=b_x$ defines the hyperplane of one of the faces of the polytope.

\subsection{Landmark Measurement Model}
\label{sec:measurement model}
We assume that, for each cell $\cX$, the locations of $N_y\geq 2$ landmarks $\{Y_i\}_{i=1}^L$ are known at planning time (if $N_y=1$ it is still possible to solve the path planning problem, but there is no opportunity to apply the main results of this paper discussed in Sec.~\ref{sec:single landmark}).
We assume that the robot is able to measure the relative displacements (see Fig.~\ref{fig:1})
\begin{equation}\label{eq:measurements}
  y_i(x)=Y_i-x+\theta_i,
\end{equation}
with noise $\theta_i\sim \cN(0,\Sigma_i)$. We assume that each measurement $y_i$ is statistically independent from other measurements $y_j$, $j\neq i$. In practice, the measurements can be acquired using typical hardware installed on mobile robots, such as LIDARs or RGBD cameras, possibly integrating multiple measurements over time using SLAM \cite{Eade:CVPR06} or filtering techniques \cite{Prazenica:GNCC05}.
To simplify the notation for the derivations in the next section, we write~\eqref{eq:measurements} in vector form as $\vy=\vY+\vtheta$, where $\vy\defeq \stack(\{y_i\})$, $\vY\defeq \stack(\{Y_i\})$, and $\vtheta\defeq \stack(\{\theta_i\})$.

Since the location of the landmarks is known and the measurement model \eqref{eq:measurements} is known, it is possible to simulate the measurement of landmarks with arbitrary locations, as detailed in the following.
\begin{definition}\label{def:virtual landmark}
  We define a \emph{virtual landmark} as an arbitrary location $Y'$ obtained as a linear combination of the existing landmarks, i.e., $Y'=\sum_i^{N_y} W_i Y_i$ where $\sum_i^{N_y} W_i=I$.
\end{definition}
\begin{proposition}\label{prop:virtual landmark}
  With the notation above, the \emph{virtual measurement} $y_W=Y_W-x$ corresponding to the virtual landmark $Y_W$ is computed as a linear combination of the measurements, $y_W=\sum_i^{N_y} W_i y_i+\theta_W$, where $\theta_W=\sum_i^{N_y} W_i\theta_i$ is a noise vector with distribution $\cN(0,\Sigma_W)$, where $\Sigma _W=\sum_i^{N_y}{W_i\Sigma _iW_{i}\transpose }$.
\end{proposition}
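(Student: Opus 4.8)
The plan is to verify the claim by direct substitution of the measurement model \eqref{eq:measurements} into the proposed linear combination of measurements, and then to compute the first two moments of the resulting noise term. First I would form $\sum_i^{N_y} W_i y_i$ and expand each $y_i$ via \eqref{eq:measurements}, i.e. $y_i = Y_i - x + \theta_i$. By linearity this gives $\sum_i^{N_y} W_i y_i = \sum_i^{N_y} W_i Y_i - \left(\sum_i^{N_y} W_i\right) x + \sum_i^{N_y} W_i \theta_i$. Invoking the defining constraint $\sum_i^{N_y} W_i = I$ from Definition~\ref{def:virtual landmark} (a matrix identity, hence valid for every $x$), the middle term collapses to $x$, while the first term is $Y_W$ by definition of the virtual landmark. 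Therefore $\sum_i^{N_y} W_i y_i = (Y_W - x) + \theta_W$ with $\theta_W \defeq \sum_i^{N_y} W_i \theta_i$, which is precisely the form of the measurement model \eqref{eq:measurements} applied to the location $Y_W$; this is what justifies calling the linear combination $\sum_i^{N_y} W_i y_i$ the virtual measurement $y_W$ of the virtual landmark $Y_W$.

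Next I would characterize the distribution of the residual noise $\theta_W$. Each $\theta_i \sim \cN(0,\Sigma_i)$, and a finite linear combination of jointly Gaussian vectors is again Gaussian, so $\theta_W$ is Gaussian with mean $\sum_i^{N_y} W_i \expect[\theta_i] = 0$. For the covariance, $\expect[\theta_W \theta_W\transpose] = \sum_i^{N_y}\sum_j^{N_y} W_i \expect[\theta_i \theta_j\transpose] W_j\transpose$; the assumed statistical independence of $y_i$ and $y_j$ for $i \neq j$ (equivalently of $\theta_i$ and $\theta_j$) makes every off-diagonal term vanish, so that $\Sigma_W = \sum_i^{N_y} W_i \Sigma_i W_i\transpose$, as claimed. (Note that the distributional conclusion is insensitive to the sign of $\theta_W$, since a zero-mean Gaussian is symmetric.)

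There is no substantive obstacle here: the argument is a one-line substitution together with the standard computation of the covariance of a linear combination of independent random vectors. The only points that warrant an explicit sentence are (i) that $\sum_i^{N_y} W_i = I$ must be read as a matrix identity, so that $\left(\sum_i^{N_y} W_i\right)x = x$ for all $x$, and (ii) that it is exactly the independence assumption from Section~\ref{section2} that eliminates the cross-covariance contributions; both are immediate from the setup already in place.
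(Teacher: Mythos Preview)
Your proposal is correct and follows essentially the same route as the paper's own proof: expand $\sum_i W_i y_i$ using the measurement model~\eqref{eq:measurements}, invoke $\sum_i W_i = I$ to collapse the state term, and then compute the mean and covariance of $\theta_W$ via the double sum with the cross terms killed by independence. Your version is slightly more explicit (you mention closure of Gaussians under linear maps and note the sign symmetry of $\theta_W$), but there is no substantive difference in approach.
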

\begin{proof}
  The first part of the claim follows from the definitions of $y_W$ and $Y_W$ and the distributivity property: $y_W=\sum_i^{N_y} W_i y_i=\sum_i^{N_y} W_i Y_i +\sum_i^{N_y} W_i x+\sum_i^{N_y} W_i\theta_i=Y_W-x+\theta_W$. The second part of the claim derives from the fact that the mean of $\theta_W$ is $\expect[\theta_W]=\sum_{i=1}^{N_y}W_i\expect[\theta_i]=0$, and its covariance is $\expect[\theta_W\theta_W\transpose]=\sum_{i=1}^{N_y}\sum_{j=1}^{N_y}W_i\expect[\theta_i\theta_j\transpose]W_j\transpose=\sum_{i=1}^{N_y}W_i\expect[\theta_i\theta_i\transpose]W_i\transpose=\sum_i^{N_y}{W_i\Sigma _iW_{i}\transpose }$, since $\theta_i,\theta_j$ are statistically independent for $i\neq j$.
\end{proof}

\subsection{Landmark-Based Linear Controllers}
Once the sequence of cells to traverse is established, we need a local controller that can guide the robot during the transition from a cell to the following one. In our approach we define a linear feedback controller that combines the multiple measurements $\{y_i\}$ according to
\begin{align}\label{eq:controller}
  u=\sum_i^{N_y}K_{i}y_i+k=\vK\vY+k,
\end{align}
where $K_i$ is the feedback matrix for measurement $y_i$, $k$ is a common control bias term, and $N_y$ is the number of measurements; additionally, the controller is written in vectorized form using $\vK\defeq\bmat{K_1 &\ldots & K_{N_y}}$. Note that~\eqref{eq:controller} represents an \emph{output feedback controller}, because we do not measure the state $x$ directly, but $N_y$ affine versions of it, each one with, in general, different noise characteristics.

We now introduce a preliminary result which allow to compute~\eqref{eq:controller} using different expressions that, in the absence of noise, are all equivalent.

\begin{proposition}[Modified from~\cite{Bahreinian:IROS21}]\label{prop:K remixing}
  Let $Y_W\in\real{d}$ be a virtual landmark. Then, the control $u$ in~\eqref{eq:controller} can be computed from the virtual measurement $y_W=Y_W-x$ as
  \begin{equation}\label{eq:controller virtual}
    u=K_Wy_W+k_W,
  \end{equation}
  where $K_W\defeq\sum_i^{N_y}K_i$ and $k_W\defeq\sum_i^{N_y}K_i(Y_i-Y_W)$.
\end{proposition}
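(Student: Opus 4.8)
The plan is to verify the identity by eliminating the state $x$ from \eqref{eq:controller}: in the noiseless regime the virtual measurement $y_W = Y_W - x$ pins down $x$ in terms of $y_W$, and substituting this back into the original controller produces a new expression that is affine in $y_W$, whose gain and offset can simply be read off. This is the standard move for re-expressing an output-feedback law relative to a different (virtual) output.

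Concretely, I would first drop the noise (as in the statement, $y_i = Y_i - x$) and expand \eqref{eq:controller} as
\begin{equation*}
  u = \sum_i^{N_y} K_i (Y_i - x) + k = \sum_i^{N_y} K_i Y_i - \Big(\sum_i^{N_y} K_i\Big) x + k .
\end{equation*}
Then, using $y_W = Y_W - x$, that is $x = Y_W - y_W$, I would substitute and separate the term linear in $y_W$ from the constant terms:
\begin{equation*}
  u = \Big(\sum_i^{N_y} K_i\Big) y_W + \sum_i^{N_y} K_i Y_i - \Big(\sum_i^{N_y} K_i\Big) Y_W + k .
\end{equation*}
The coefficient of $y_W$ is $K_W = \sum_i^{N_y} K_i$, as claimed. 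For the remaining constants, distributing $\sum_i^{N_y} K_i$ over the single vector $Y_W$ gives $\sum_i^{N_y} K_i Y_i - \sum_i^{N_y} K_i Y_W = \sum_i^{N_y} K_i (Y_i - Y_W)$, which (together with the original offset $k$, absorbed into $k_W$) yields the claimed bias $k_W = \sum_i^{N_y} K_i(Y_i - Y_W)$.

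There is no genuinely hard step here — the result is a one-line change of variables for an affine map — so the only points worth stating carefully are two. First, where Definition \ref{def:virtual landmark} enters: the constraint $\sum_i W_i = I$ is exactly what makes the linear combination $\sum_i W_i y_i$ coincide with $Y_W - x$ in the noiseless case (consistent with Proposition \ref{prop:virtual landmark}), and this is precisely the relation that lets us solve for $x$. Second, the equivalence between \eqref{eq:controller} and \eqref{eq:controller virtual} is exact only for the noiseless (signal) part: with noise the original law contributes $\sum_i^{N_y} K_i\theta_i$ whereas the remixed law contributes $K_W\theta_W = \big(\sum_i K_i\big)\big(\sum_i W_i\theta_i\big)$, and these differ in general — it is exactly this discrepancy, and the freedom in choosing $W$ to shape $\theta_W$, that the later sections exploit.
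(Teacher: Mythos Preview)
Your argument is correct and is exactly the elementary change of variables one would expect; the paper itself does not give a proof but simply refers the reader to \cite{Bahreinian:IROS21}, so there is nothing in-paper to compare against. Your aside that the stated $k_W$ omits the original bias $k$ is also accurate: the identity $u=K_Wy_W+k_W$ matches \eqref{eq:controller} only if one reads $k_W=\sum_i K_i(Y_i-Y_W)+k$, which appears to be a typo in the proposition as stated.
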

See \cite{Bahreinian:IROS21} for a proof. The proposition, in principle, allows us to compute the control $u$ based on any point in the environment (and in particular with respect to arbitrary virtual landmarks).

\subsection{Control Barrier Function}
Control Barrier Functions (CBFs)  are a widely used tool in safety critical system.

\begin{definition}[ECBF, \cite{c3}] For an affine nonlinear system $\dot{x}=f\left( x \right) +g\left( x \right) u$ and a set $\cC=\{x\in \mathbb{R}^n : h(x)\ge0\}$, where the system has relative degree $r$ with respect to the function $h(x)$, $h(x)$ is an \emph{Exponential Control Barrier Function } (ECBF \cite{c3}), if there exists $\alpha_h\in \mathbb{R}^{1\times r}$ such that
  \begin{equation} \label{ecbf}
    L^r_fh\left( x \right) +L_gL^{r-1}_fh\left( x \right) u+ \alpha_h \xi(x) \ge 0,
  \end{equation}
  where $\xi \left( x \right) =\left[ h\left( x \right) ,L_fh\left( x \right) ,...,L_{f}^{r-1}h\left( x \right) \right]\transpose$ and $\alpha_h=[\alpha_{h0}, \alpha_{h1}, ..., \alpha_{hr-1}]$. The row vector $\alpha_h$ is selected satisfying the requirements given by \cite{c3}.
\end{definition}

In the context of a robot navigation task, a wall or boundary of a convex region is represented with the barrier function $h(x)=A_hx+b_h$, where $A_h \in \mathbb{R}^{1\times n}$ represents the normal to the wall, $b_h \in \mathbb{R}$, and the set $C=\{x\in\real{n}:h(x)>0\}$ represents the free space outside the wall. Considering the LTI system \eqref{dyn}, inequality \eqref{ecbf} can be written in matrix form as
\begin{equation} \label{lecbf}
  \vA  x+\vB  u+d\ge 0,
\end{equation}
where
\begin{equation} \notag
  \begin{aligned}
    \vA  &\defeq \alpha_h \bmat{
      A_h\\
      A_hA\\
      \vdots\\
      A_hA^{r-1}\\
      A_hA^r\\
    },    &
    \vB  &\defeq A_hA^{r-1}B, & d\defeq \alpha _{h0}b_h.
  \end{aligned}
\end{equation}

\begin{remark}
  For simplicity, we present our theory using a single CBF constraint. However, it is straightforward to extend the results to multiple the general case with multiple walls by using additional constraints of the form \eqref{lecbf}.
\end{remark}

\subsection{Control Barrier Function Chance Constraints}
In practice, the control output $u$ for constraint~\eqref{lecbf} is calculated from the measurements as in~\eqref{eq:controller} (or~\eqref{eq:controller virtual}, when virtual landmarks are used), which contains noise and is hence a stochastic quantity. As a consequence, the inequality constraint~\eqref{lecbf} becomes a stochastic quantity as well: we therefore rewrite it as a chance constraint

\begin{equation}\label{eq:chance constraint}
  P(\vA  x+\vB  u+d\ge 0) \ge 1 - \eta(x),
\end{equation}
where $P$ represents a Gaussian distribution, and $\eta(x)$ is a function of the state $x$ satisfying the following conditions:
\begin{itemize}
\item $0<\eta(x)<1$ for $x \in \cX$,
\item $\eta(x)$ is continuous and concave.
\end{itemize}

At this point, we could use the results of~\cite{Wang:ACC21} to transform~\eqref{eq:chance constraint} into constraints that are quadratic in $\vK$ and linear in $x$ (so that they can be used to find the controllers); however, we propose here a derivation based on the Gaussian cumulative distribution function that is both simpler and leads to tighter constraints.

\begin{proposition}\label{prop:chance constraint}
  A sufficient condition for~\eqref{eq:chance constraint} to hold is
  \begin{equation} \label{sufficient_condition}
    \bar{K}_2\Gamma \bar{K}_2\transpose -\bar{K}_1x-\bar{k}_3\le 0
  \end{equation}
  where $\eta_0$ is a lower bound on $\eta$ in the current region~$\cX$, $\Gamma =\frac{1}{\eta_0}\vSigma$, and
  \begin{align}
    \bar{K}_1&\defeq \vA  -\vB  \sum_i K_i,\\
    \bar{K}_2&\defeq \vB \vK,\\
    \bar{k}_3&\defeq \vB (\vK \vY+k)+d.
  \end{align}
\end{proposition}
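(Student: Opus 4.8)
The plan is to reduce the probabilistic inequality \eqref{eq:chance constraint} to a deterministic algebraic condition on $x$ and $\vK$, in three moves: identify the quantity inside the probability as a scalar Gaussian and read off its mean and variance in terms of $\bar K_1,\bar K_2,\bar k_3$; convert the chance constraint into an inequality on the ratio mean/standard-deviation through the normal CDF; and finally replace $\eta(x)$ by the uniform lower bound $\eta_0$ and estimate the resulting normal quantile to land on \eqref{sufficient_condition}.

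First I would establish that $Z\defeq\vA x+\vB u+d$ is Gaussian. Substituting the output-feedback law \eqref{eq:controller} and the measurement model \eqref{eq:measurements} makes $Z$ an affine function of the stacked, zero-mean noise $\vtheta\sim\cN(0,\vSigma)$, where $\vSigma=\blkdiag(\Sigma_1,\dots,\Sigma_{N_y})$ is the covariance of $\vtheta$ (block-diagonal by the assumed independence of the $\theta_i$); hence $Z\sim\cN(\mu,\sigma^2)$ with $\mu\defeq\expect[Z]$ and $\sigma^2\defeq\expect[(Z-\mu)^2]$. Collecting the deterministic terms exactly as in the definitions of $\bar K_1$ and $\bar k_3$ — the term $\vA x$ together with $\vB\sum_i K_i(Y_i-x)+\vB k+d$ recombines into $(\vA-\vB\sum_i K_i)x+\vB(\vK\vY+k)+d$ — gives $\mu=\bar K_1 x+\bar k_3$; and, by the same independence computation used in the proof of Proposition~\ref{prop:virtual landmark}, $\sigma^2=\vB\vK\vSigma\vK\transpose\vB\transpose=\bar K_2\vSigma\bar K_2\transpose$.

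Next I would rewrite the chance constraint. If $\sigma=0$ then $Z=\mu$ is deterministic and \eqref{eq:chance constraint} is just $\mu\ge0$; otherwise $P(Z\ge0)=\Phi(\mu/\sigma)$ with $\Phi$ the standard normal CDF, so \eqref{eq:chance constraint} is equivalent to $\mu\ge\Phi^{-1}\bigl(1-\eta(x)\bigr)\,\sigma$. Since $\eta_0\le\eta(x)$ on $\cX$ and $\Phi^{-1}$ is increasing, it suffices to impose $\mu\ge\Phi^{-1}(1-\eta_0)\,\sigma$; as the right-hand side is nonnegative (for $\eta_0\le\tfrac12$), this is in turn equivalent to $\mu\ge0$ together with $\mu^2\ge\Phi^{-1}(1-\eta_0)^2\,\sigma^2$. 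The elementary quantile estimate $\Phi^{-1}(1-\eta_0)^2\le\tfrac{1-\eta_0}{\eta_0}\le\tfrac1{\eta_0}$ (the first inequality being Cantelli's bound specialized to a Gaussian, which is what makes this derivation tighter than the earlier Chebyshev-based transformation) then lets us strengthen the requirement to $\mu^2\ge\tfrac1{\eta_0}\sigma^2$; substituting $\sigma^2=\bar K_2\vSigma\bar K_2\transpose$, $\Gamma=\tfrac1{\eta_0}\vSigma$, $\mu=\bar K_1 x+\bar k_3$, and noting that $\bar K_2\Gamma\bar K_2\transpose\ge0$ automatically forces $\bar K_1 x+\bar k_3\ge0$, this becomes exactly \eqref{sufficient_condition}.

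The main obstacle will be the last part of the second step: moving from the exact second-order-cone condition $\mu\ge\Phi^{-1}(1-\eta_0)\,\sigma$ — which is correct but inconvenient inside the controller-synthesis program — to a condition that is quadratic in the gains $\vK$ and low-order polynomial in $x$, \emph{without} discarding the sign information $\mu\ge0$ that is needed before squaring, and while keeping the relaxation tight. The step $\Phi^{-1}(1-\eta_0)^2\le1/\eta_0$ is the controlled source of conservatism that buys the clean dependence $\Gamma=\vSigma/\eta_0$. It is also worth verifying, since $\eta$ is concave on the polytope $\cX$, that $\eta_0$ can be taken as the minimum of $\eta$ over the vertices of $\cX$, so that enforcing \eqref{sufficient_condition} at those vertices is enough and the feasible set in $(\vK,k)$ stays compatible with the QCQP machinery used in the rest of the paper.
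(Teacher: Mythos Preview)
Your chain of implications does not land on \eqref{sufficient_condition}. After squaring and invoking the Cantelli-type estimate $\Phi^{-1}(1-\eta_0)^2\le 1/\eta_0$, the sufficient condition you actually obtain is
\[
\mu^2 \;\ge\; \tfrac{1}{\eta_0}\,\sigma^2 \;=\; \bar K_2\Gamma\bar K_2\transpose,\qquad \mu\ge 0,
\]
i.e.\ $\mu\ge\sqrt{\bar K_2\Gamma\bar K_2\transpose}$, not $\mu\ge\bar K_2\Gamma\bar K_2\transpose$. These coincide only when $\bar K_2\Gamma\bar K_2\transpose\in\{0,1\}$; in general neither implies the other, so the sentence ``this becomes exactly \eqref{sufficient_condition}'' is where the argument breaks. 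The squaring step is the culprit: it turns a condition that is linear in $\mu$ and linear in $\sigma$ into one that is quadratic in $\mu$ and linear in $\sigma^2$, whereas \eqref{sufficient_condition} is \emph{linear} in $\mu$ and linear in $\sigma^2$. No amount of bounding the squared quantile will recover that shape.

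The paper's route is structurally different and avoids squaring altogether. It bounds the quantile itself via Lemma~\ref{lemma:bound inverse cumulative}, $\Phi^{-1}(1-\eta)<1/\eta\le 1/\eta_0$, and applies this directly to the CDF equivalence, which the paper writes with the \emph{variance} $\bar K_2\vSigma\bar K_2\transpose$ (not its square root) in the denominator; rearranging then yields \eqref{sufficient_condition} in one line. If you want to repair your argument along your own lines, you must either (i) use a linear bound on $\Phi^{-1}(1-\eta_0)$ as the paper does and keep $\sigma^2$ rather than $\sigma$ on the right-hand side, or (ii) accept that your derivation produces a \emph{different} sufficient condition, $\bar K_1x+\bar k_3\ge\sqrt{\bar K_2\Gamma\bar K_2\transpose}$, which is a second-order-cone constraint in $\vK$ and is not the quadratic constraint the rest of the paper feeds into \eqref{eq:K opt dual}.
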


Before proving the proposition, we need the following:
\begin{lemma}\label{lemma:bound inverse cumulative}
  Let $\Phi(\cdot)$ be the cumulative distribution of a scalar Gaussian random variable with zero mean and unit variance. The following inequality holds for $\eta>0$:
  \begin{equation}\label{eq:cumulative inequality}
    \frac{1}{\eta} > \Phi^{-1}(1-\eta)
  \end{equation}
\end{lemma}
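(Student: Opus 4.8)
The plan is to reduce the claimed inequality to a standard Gaussian tail bound. Since $\Phi$ is continuous and strictly increasing with $\Phi(0)=\tfrac12$, the quantity $\Phi^{-1}(1-\eta)$ is only meaningful for $0<\eta<1$, and in the sub-range $\tfrac12\le\eta<1$ we have $1-\eta\le\tfrac12$, hence $\Phi^{-1}(1-\eta)\le\Phi^{-1}(\tfrac12)=0<\tfrac1\eta$, so there is nothing to prove. I would therefore assume $0<\eta<\tfrac12$, equivalently $t\defeq\tfrac1\eta>2$, for the remainder.

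Applying the strictly increasing map $\Phi$ to both sides, the inequality $\Phi^{-1}(1-\eta)<\tfrac1\eta$ is equivalent to $1-\eta<\Phi(\tfrac1\eta)$, i.e. to the tail estimate $1-\Phi(t)<\tfrac1t$. I would establish this with the elementary Mills-ratio bound: on $[t,\infty)$ we have $s/t\ge 1$, so
\begin{equation}\notag
  1-\Phi(t)=\frac{1}{\sqrt{2\pi}}\int_t^{\infty}e^{-s^2/2}\,\de s\le\frac{1}{\sqrt{2\pi}}\int_t^{\infty}\frac{s}{t}\,e^{-s^2/2}\,\de s=\frac{e^{-t^2/2}}{t\sqrt{2\pi}}.
\end{equation}
Because $e^{-t^2/2}\le 1<\sqrt{2\pi}$, the right-hand side is strictly smaller than $\tfrac1t$, which yields $1-\Phi(t)<\tfrac1t$ and hence the lemma.

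There is no genuinely hard step, but the one point requiring care is that the inequality is asymptotically tight as $\eta\to 0^+$: both sides of the equivalent form $1-\Phi(1/\eta)<\eta$ vanish in the limit, so a crude tail bound such as $1-\Phi(t)\le\tfrac12 e^{-t^2/2}$ would not close the gap on its own — the extra $1/t$ factor from the Mills-ratio estimate is precisely what makes the comparison go through. I would also remark that this lemma is the tool that lets one replace $\Phi^{-1}(1-\eta_0)$ by the larger, analytically convenient quantity $1/\eta_0$ when deriving the sufficient condition~\eqref{sufficient_condition} for the chance constraint~\eqref{eq:chance constraint}, so that passing to $1/\eta_0$ only tightens the constraint.
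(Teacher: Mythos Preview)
Your proof is correct and takes a genuinely different route from the paper. The paper defines $f(\eta)=\Phi(1/\eta)+\eta-1$, checks that $\lim_{\eta\to 0^+}f(\eta)=0$, and then shows $f'(\eta)>0$ via the elementary bound $te^{-t}\le 1/e$; positivity of $f$ is exactly the inequality $\Phi(1/\eta)>1-\eta$. You instead reduce to the same target $1-\Phi(t)<1/t$ but establish it directly with the Mills-ratio estimate $1-\Phi(t)\le \phi(t)/t$. Your argument is arguably more transparent (it invokes a named, standard tail bound rather than an ad hoc derivative computation) and it also handles the domain issue that the paper glosses over, namely that $\Phi^{-1}(1-\eta)$ is only defined for $0<\eta<1$ and that the half-range $\eta\ge 1/2$ is trivial. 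The paper's approach, on the other hand, is fully self-contained and does not presuppose familiarity with Mills' ratio.

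One small correction to your closing remark: the ``crude'' bound $1-\Phi(t)\le\tfrac12 e^{-t^2/2}$ \emph{would} in fact suffice here, since $\tfrac12 e^{-t^2/2}<1/t$ is equivalent to $te^{-t^2/2}<2$, and the left side is at most $e^{-1/2}<1$. So the extra $1/t$ from Mills' ratio is convenient but not essential; your intuition that the asymptotic tightness as $\eta\to 0^+$ forces the sharper bound is not quite right, because the Gaussian tail decays far faster than $1/t$ anyway.
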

\begin{proof}
  Consider the function $f(\eta)=\Phi(\frac{1}{\eta}) + \eta - 1$, $\eta>0$. We have $\lim_{\eta \to 0^+} f(\eta)=0$ and $\frac{\mathrm{d}}{\mathrm{d}\eta}f(\eta)=1-\frac{2e^{-\frac{1}{\eta^2}}}{\sqrt{\pi}\eta^2} \geq 1-\frac{2}{e\sqrt{\pi}} > 0$. Thus, we get $f(\eta)=\Phi(\frac{1}{\eta}) + \eta - 1 > 0$, from which the claim follows.
\end{proof}

\begin{proof}[of Prop.~\ref{prop:chance constraint}]
  Substituting~\eqref{eq:controller} into~\eqref{eq:chance constraint} we obtain
  \begin{equation}\label{eq:chance constraint K}
    P(\bar{K}_1x+\bar{K}_2\theta +\bar{k}_3\ge 0)\ge 1-\eta (x).
  \end{equation}
  Letting $\vSigma=\blkdiag(\{\Sigma_i\})$, we have that the random variable $\frac{1}{\sqrt{\bar{K}_2\vSigma\bar{K}_2\transpose}}\bar{K}_2\vtheta$ has zero mean and unit variance. From the definition of cumulative distribution, we then have that \eqref{eq:chance constraint K} is equivalent to
  \begin{equation} \label{eq:original condition}
    \frac{\bar{K}_1x+\bar{k}_3}{\bar{K}_2\vSigma \bar{K}_2\transpose }\ge \Phi ^{-1}\left( 1-\eta(x) \right).
  \end{equation}

  Furthermore, from Lemma~\ref{lemma:bound inverse cumulative} and the definition of $\eta_0$, we have
  \begin{equation}\label{eq:bound eta}
    \frac{1}{\eta_0}\geq \frac{1}{\eta(x)}>\Phi ^{-1}\left( 1-\eta(x) \right).
  \end{equation}

  Combining~\eqref{eq:original condition} with~\eqref{eq:bound eta} and rearranging terms, we obtain the result of the proposition
\end{proof}

% An immediate idea is minimizing the covariance of the uncertain term %$\vB  K\theta$. The corresponding variance is
% \begin{equation} \label{op1}
%   \sigma_K=\vB  K\Sigma K\transpose\vB \transpose\defeq \bar{K}_2 \Sigma %\bar{K}_2\transpose,
% \end{equation}
% where $\bar{K}_2 \defeq \vB K$ is a row vector.

\begin{remark}\label{remark:K opt covariance}
  The term $\sigma_K^2=\bar{K}_2\vSigma \bar{K}_2\transpose $ indicates the variance of the controller towards the direction of the barrier boundary. From~\eqref{eq:original condition}, we see that reducing $\sigma_K^2$ makes the chance constraint looser, hence giving the opportunity to improve $\vK$. In Sec.~\ref{section3}, we show how to minimize this variance $\sigma_K^2$ (and hence improve $\vK$) by weighting the measurements.
\end{remark}

We can apply \eqref{sufficient_condition} as a constraint while optimizing, e.g., the Frobenious norm of the feedback matrix, leading to the optimization problem
\begin{equation}\label{eq:K opt}
  \begin{aligned}
    \min & \norm{\vK}^2_2 \\
    \subjectto& \bar{K}_2\Gamma \bar{K}_2\transpose -\bar{K}_1x-\bar{k}_3\le 0\\
    & \forall x \in \cX
  \end{aligned}
\end{equation}

The optimization can be rewritten as a QCQP (see Appendix~\ref{sec:appendix K opt} for a detailed derivation).
\begin{equation}\label{eq:K opt dual}
  \begin{aligned}
    \min  &\norm{\vK}_2\\
    \subjectto &  \bar{K}_2\Gamma \bar{K}_2\transpose -\bar{k}_3+b_x\transpose\lambda \le 0\\
    &  A_x\transpose\lambda+\bar{K}_1\transpose=0\\
    & \lambda \ge 0\\
  \end{aligned}
\end{equation}

An analogous process (omitted here for the sake of brevity), can be applied to obtain Control Lyapunov Function chance constraints that can be incorporated in the optimization problem \eqref{eq:K opt dual}: it is sufficient to replace $A_h$, $b_h$ with the definition of the hyperplane identifying the exit face of the cell, inverting the sense of the inequality in \eqref{lecbf}, and then follow the same steps.

% With considering the stability requirement, we can apply a same process to generate the Chance Control Lyapunov Function constraint. Specifically, for the Lyapunov function $A_zx +b_z \ge 0$, the optimization becomes
% \begin{equation}\label{QCQP}
%   \begin{aligned}
%     &\min ||K||_2
%     \\
%     &s.t.\begin{cases}
%       \bar{K}_2\Gamma \bar{K}_2\transpose -\bar{k}_3+\lambda\transpose b_h\le 0\\
%       -\lambda\transpose A_h+\bar{K}_1=0\\
%       \bar{K}_2'\Gamma {\bar{K}_2'}{}\transpose +k_{3}'+\gamma\transpose b_h\le 0\\
%       \gamma\transpose A_h+K_{1}^{'}=0\\
%       \lambda \ge 0,\gamma \ge 0\\
%     \end{cases}
%   \end{aligned}
% \end{equation}
% where $\bar{K}_1^\prime$, $\bar{K}_2^\prime$, $\bar{k}_3^\prime$ and $\gamma$ are defined in the same way with Chance CBF in the previous process.

\section{Virtual Landmarks for Minimum Chance Control Synthesis} \label{section3}

In this section we propose to improve the feasibility of the QCQP~\eqref{eq:K opt dual} by replacing the measurements and landmarks $\{y_i,Y_i\}$ with virtual landmarks. We first consider the case of a single virtual landmark (for which the optimal weights $\{W_i\}$ can be obtained independently of the controller $\cK$ and the wall parameters $A_h,b_h$), and then the case of multiple, statistically independent landmarks (which require solving for the weights $\{W_i\}$ and the controller $\cK$ simultaneously).

\subsection{Single Virtual Landmark}\label{sec:single landmark}
If we use a single measurement $y_W$ with respect to a single virtual landmark $Y_W$ (see Def.~\ref{def:virtual landmark}), the optimization problem~\eqref{eq:K opt dual} is still valid, except that $\bar{K}_1$, $\bar{K}_2$ and $\bar{k}_3$ need to be changed; as a consequence, from the characterization of the covariance of $y'$ in Prop.~\ref{prop:virtual landmark}, we have that $\sigma_K^2=\bar{K}_2\Sigma_W\bar{K}_2\transpose$. Following Remark~\ref{remark:K opt covariance} we the propose to find the set of weights $\{W_i\}$ that maximize the feasibility of the QCQP \eqref{eq:K opt dual}; specifically, we aim to solve the problem
% We can combine the different estimates $\x_i$ into a single estimate of the state $x$ To get the best estimation of state $x$, we can weight the measured $x_i$,
% \begin{equation}
%   x' = \sum_{i}^{N_y} W_i x_i
%   = \sum_{i}^{N_y} W_i (x + \theta_i) \notag
%   = x + \sum_{i}^{N_y} W_i \theta_i,
% \end{equation}
\begin{equation} \label{op1}
  \begin{array}{c}
    \min_i \quad \bar{K}_2\Sigma _W\bar{K}_2\transpose\\
    s.t.\quad \sum_i{W_i=I},\\
  \end{array}
\end{equation}

\begin{proposition}\label{prop:weights first}
  The solution of the optimization problem \ref{op1} can be obtained with simple matrix operations:
  \begin{equation} \label{solu1}
    W_i=\Bigl( \sum_i{\Sigma _{i}^{-1}} \Bigr)\inverse\Sigma _{i}^{-1}.
  \end{equation}
\end{proposition}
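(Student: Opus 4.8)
The plan is to prove a statement slightly stronger than what is asked, namely that the weights in~\eqref{solu1} make the combined covariance $\Sigma_W$ minimal in the positive‑semidefinite (Loewner) order among \emph{all} feasible choices of $\{W_i\}$. Since the scalar map $\Sigma_W\mapsto \bar K_2\Sigma_W\bar K_2\transpose$ is monotone with respect to that order, Loewner‑minimality immediately implies optimality for~\eqref{op1}; as a bonus it makes transparent why the minimizer does not depend on $\bar K_2$ (equivalently, on the feedback matrix $\vK$ or the wall parameters $A_h,b_h$), which is the decoupling phenomenon advertised in the introduction. Throughout I assume each $\Sigma_i\succ0$, so that $\sum_j\Sigma_j^{-1}\succ0$ and $P\defeq\bigl(\sum_j\Sigma_j^{-1}\bigr)^{-1}$ is well defined and symmetric. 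I would first substitute $\Sigma_W=\sum_i W_i\Sigma_i W_i\transpose$ from Prop.~\ref{prop:virtual landmark} into the objective, set $W_i^\star\defeq P\Sigma_i^{-1}$, and check feasibility: $\sum_i W_i^\star=P\sum_i\Sigma_i^{-1}=PP^{-1}=I$.

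The core of the argument is one short expansion. For an arbitrary feasible $\{W_i\}$ write $W_i=W_i^\star+D_i$ with $\sum_i D_i=0$, and note the key fact that $W_i^\star\Sigma_i=P$ is the \emph{same} matrix for every $i$ (hence also $\Sigma_i W_i^{\star\transpose}=P$). Then
\begin{equation*}
  \Sigma_W=\sum_i W_i^\star\Sigma_i W_i^{\star\transpose}+\sum_i W_i^\star\Sigma_i D_i\transpose+\sum_i D_i\Sigma_i W_i^{\star\transpose}+\sum_i D_i\Sigma_i D_i\transpose .
\end{equation*}
The two middle sums collapse to $P\bigl(\sum_i D_i\bigr)\transpose=0$ and $\bigl(\sum_i D_i\bigr)P=0$, while the first sum equals $\sum_i P\Sigma_i^{-1}P=P\bigl(\sum_i\Sigma_i^{-1}\bigr)P=P$. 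Therefore $\Sigma_W=P+\sum_i D_i\Sigma_i D_i\transpose\succeq P$, with equality for $W_i=W_i^\star$ (all $D_i=0$). Consequently $\bar K_2\Sigma_W\bar K_2\transpose\ge \bar K_2 P\bar K_2\transpose$ for every $\bar K_2$ and the bound is attained by $\{W_i^\star\}$, which is exactly~\eqref{solu1}.

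I do not expect a serious obstacle here: this is the matrix version of the classical inverse‑variance‑weighted combination of independent estimates, and it could equivalently be derived by a Lagrange‑multiplier computation on $\sum_i g_i\transpose\Sigma_i g_i$ under $\sum_i g_i=\bar K_2\transpose$ after the substitution $g_i=W_i\transpose\bar K_2\transpose$. The only points needing a little care are the transpose bookkeeping and the observation that $W_i^\star\Sigma_i$ is independent of $i$, which is precisely what kills the cross terms and yields a single minimizer valid for all $\bar K_2$. I would also add a remark that $\{W_i^\star\}$ is in fact the \emph{unique} minimizer of $\Sigma_W$ in the Loewner order, since $\sum_i D_i\Sigma_i D_i\transpose=0$ with $\Sigma_i\succ0$ forces every $D_i=0$, whereas the scalar problem~\eqref{op1} for a fixed $\bar K_2$ need not have a unique minimizer when the landmark dimension exceeds one.
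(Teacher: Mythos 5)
Your proof is correct, but it takes a genuinely different route from the paper's. The paper derives \eqref{solu1} by forming the Lagrangian of \eqref{op1}, differentiating (via a time-derivative device) to get the stationarity condition $2\bar K_2\transpose\bar K_2 W_i\Sigma_i+\Lambda=0$, solving for $W_i$ in terms of the multiplier $\Lambda$, and substituting back into $\sum_i W_i=I$; because $\bar K_2\transpose\bar K_2$ is typically rank-deficient (it is an outer product of a row vector), the paper must use a pseudo-inverse and an $\varepsilon\sum_i W_iW_i\transpose$ regularization, after which the $\bar K_2$-dependence cancels in the final expression, recovering the Weighted-Least-Squares form. You instead prove the stronger Loewner-order statement $\Sigma_W=P+\sum_i D_i\Sigma_i D_i\transpose\succeq P$ with $P=\bigl(\sum_j\Sigma_j^{-1}\bigr)^{-1}$, by a completing-the-square expansion in which the cross terms vanish because $W_i^\star\Sigma_i=P$ is the same matrix for every $i$ and $\sum_i D_i=0$; monotonicity of $\Sigma_W\mapsto\bar K_2\Sigma_W\bar K_2\transpose$ then gives optimality for \eqref{op1}. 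What your approach buys: it certifies \emph{global} optimality rather than mere stationarity of a non-strictly-convex objective, it needs no regularization or pseudo-inverse since $\bar K_2\transpose\bar K_2$ never has to be inverted, it makes the independence of the optimal weights from the controller (and from $A_h,b_h$) an immediate corollary rather than an after-the-fact cancellation, and it cleanly separates Loewner-uniqueness of $\{W_i^\star\}$ from possible non-uniqueness of minimizers of the scalar objective for a fixed $\bar K_2$. What the paper's route buys is a template that extends directly to the constrained problem \eqref{op2} for the subsequent virtual landmarks (Appendix~\ref{sec:appendix LME}), where the uncorrelation constraints couple the weights to $\bar K_2$ and a completing-the-square argument no longer closes in the same way; your argument, as written, is specific to the first landmark.
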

The proof of this proposition is given in Appendix~\ref{sec:appendix weights}, and is related to standard proofs for Weighted Least Squares estimation.
\begin{remark}
  The matrix $\bar{K}_2$ does not appear in \eqref{solu1}. Thus, the weight $\hat{W}_i$ is independent with the controller $K$.
  This shows that the control design can be factorized into optimal estimation and control, which is reminiscent of LQG design. %Thus, we said the separation principle hold in this case.
\end{remark}
\begin{remark} It is possible to prove that $\sigma_K^2$ cannot become worse after fusing the measurements into a single landmark, although such proof has been omitted due to space constraints.
\end{remark}

\subsection{Linear Multiple Estimation}
\label{sec:LME}
In the previous section we have shown how to create a single virtual landmark from the initial $N_y$ landmarks to improve the feasibility of the design problem for the controller $\cK$; the solution, as it turns out, does not depend on the actual controller chosen. Intuitively, however, the use of a single landmark may loose some information due to the reduction in dimensionality. It is then natural to ask whether it is possible to find additional virtual landmarks that may provide complementary information (more precisely, that are statistically uncorrelated). In this section we show that this is indeed possible, although the result depends on the controller (which needs to be synthesized jointly).

Inspired by the Principal Component Analysis (PCA) which decomposes the original data to different directions, we propose a similar decomposition-based weighting method.

Suppose we want to generate a set of a set of additional virtual measurements $l>1$.
\begin{equation}
  y_{W,l}=\sum_iW_i^ly_i,
\end{equation}
where $y_{Wk}$ indicates the $k$-th weighted measurement. To ensure that each weighted measurement carries novel information, we require that it is uncorrelated with previous virtual landmarks by enforcing the constraint
\begin{equation} \label{uncor_constraint}
  \expect [y_l'y_j']=0.
\end{equation}

For $l=1$, $\{W_i^{1}\}$ is given by \eqref{solu1}; $y'_1=\sum_iW_i^1y_i$, our first virtual landmark, can be interpreted as the first principal component. For $l\ge 2$, $\{W_i^{1}\}$ needs to follow the uncorrelated constraint \eqref{uncor_constraint}, which can be rewritten as the following form,
\begin{equation} \label{uncor_constraint2}
  \sum_i{\left( W_{i}^{j} \right)\transpose\Sigma _iW_{i}^{l}=0}
\end{equation}

To show this, suppose that the constraint \eqref{uncor_constraint2} hold. For two virtual landmarks $j$ and $l$, the noise terms are $\delta_j = \sum_i{W^{j}_i\theta_i}$ and $ \delta_l = \sum_i{W^{l}_i\theta_i}$. Requiring that $y_j$ and $y_l$ are uncorrelated can be formally stated as:
\begin{equation}
  \begin{aligned}
    \textrm{Cov}(\delta _j,\delta _l)&=\mathbb{E}\left[ \delta _{j}\delta_l\transpose  \right]\\
                                     &=\mathbb{E} \left[ \left( \sum_i{W_{i}^{j}\theta _i} \right) \left( \sum_i{W_{i}^{l}\theta _i} \right)\transpose\right]\\
    &=\sum_i{\sum_{i^\prime}{W_{i}^{j}\mathbb{E} \left( \theta _i\theta _{i^\prime}\transpose  \right) W_{i^\prime}^{lT}}}\\
    &=\sum_i{W_{i}^{j}\Sigma _iW_{i}^{lT}}=0,
    %&=\sum_i{W_{i}^{jT}}\Sigma _iW_{i}^{k}\\
  \end{aligned}
\end{equation}

With the constraints above, we can find each new landmark $y_{l}'$ in sequence after solving for $y'_1,\ldots,y'_{l-1}$ by solving the following optimization problem (which is similar to \eqref{op1} with the additions of the uncorrelation constraints \eqref{uncor_constraint2}).
\begin{equation} \label{op2}
  \begin{array}{c}
    \min_{\{W_{i}^{l}\}} \bar{K}_2\sum_i{(}W_{i}^{l}\Sigma_iW_{i}^{lT})\bar{K}_2\transpose\\
    s.t.\begin{cases}
      \sum_i{W_i^l=I}\\
      \sum_i{\left( W_{i}^{1} \right)\transpose\Sigma _iW_{i}^{l}=0},\\
      \qquad \vdots \\
      \sum_i{\left( W_{i}^{l-1} \right)\transpose\Sigma _iW_{i}^{l}}=0,\\
    \end{cases}\\
  \end{array}
\end{equation}

The solution is given in Appendix~\ref{sec:appendix LME}.

The weights for virtual landmarks other than the first, i.e., $l>1$, depend on $\bar{K}_2$, which contains the controller $K$ (as shown in \eqref{op2} and already mentioned above). Hence, to rigorously make use of additional virtual landmarks, we would need to solve \eqref{eq:K opt dual} jointly with the weights $W_i^l$, under the constraints from \eqref{op2}. In general, this results in a non-convex formulation, and whose solution is beyond the scope of this paper. As a result, we will limit the simulations to the case of a single landmark, $l=1$, showing that the this already significantly improves the performance of the controller.

% The LME gives $k$ sets of weights to combine $N_y$ landmarks. The generated estimation is interpreted as virtual marks. They are called the first virtual mark, the second virtual and so on. If $k=N_y$, the LME gives $N_y$ uncorrelated virtual marks. Thus, the LME does not lose any information from the theoretical point of view.

% However, it is suggested that only several top virtual marks with minor variance are utilized in practice. In addition, only the first virtual mark is independent of the controller $K$. This implies that optimizing the controller design process and the virtual mark estimation jointly may be difficult.

\subsection{Control Synthesis} \label{section4}

As discussed above, for each chance constrain (i.e., for each wall), we can pick virtual landmarks that minimize the variance in each chance CBF constraint. To actually synthesize the controller $K$, we therefore propose the following steps:

\begin{enumerate}
\item Generate virtual landmarks $y_{W,l}$.
\item Solve the controller design problem~\eqref{eq:K opt dual} based on the virtual landmarks.
\item If the noise covariance information is updated, use simple matrix operations to recompute the weights (Prop.~\ref{prop:weights first} or eq.~\ref{solu1}) and then adapt the controller using \eqref{prop:K remixing}.
\end{enumerate}
Since the weight of the first virtual landmark is unrelated to the controller, it can always be calculated before the feedback controller design. Hence, the controller synthesis should consider the noise and covariance of the first virtual landmark (Prop.~\ref{prop:virtual landmark}) instead of  noise of the physical landmarks.

\section{Numerical Validation and Discussion} \label{section5}
In this section, we first demonstrate the application of Linear Multiple Estimation for multiple landmarks; then, we show the effectiveness the feedback control synthesis with a single virtual landmark against multiple physical landmarks. For all the plots, the blue (respectively, red) markers indicate physical (respectively, virtual) landmarks with randomly-generated positions; the corresponding ellipses represent the covariance of the measurements; solid straight lines indicates the boundaries of the environment.

\begin{figure}[htbp]
  \centering
  \includegraphics[width=\linewidth]{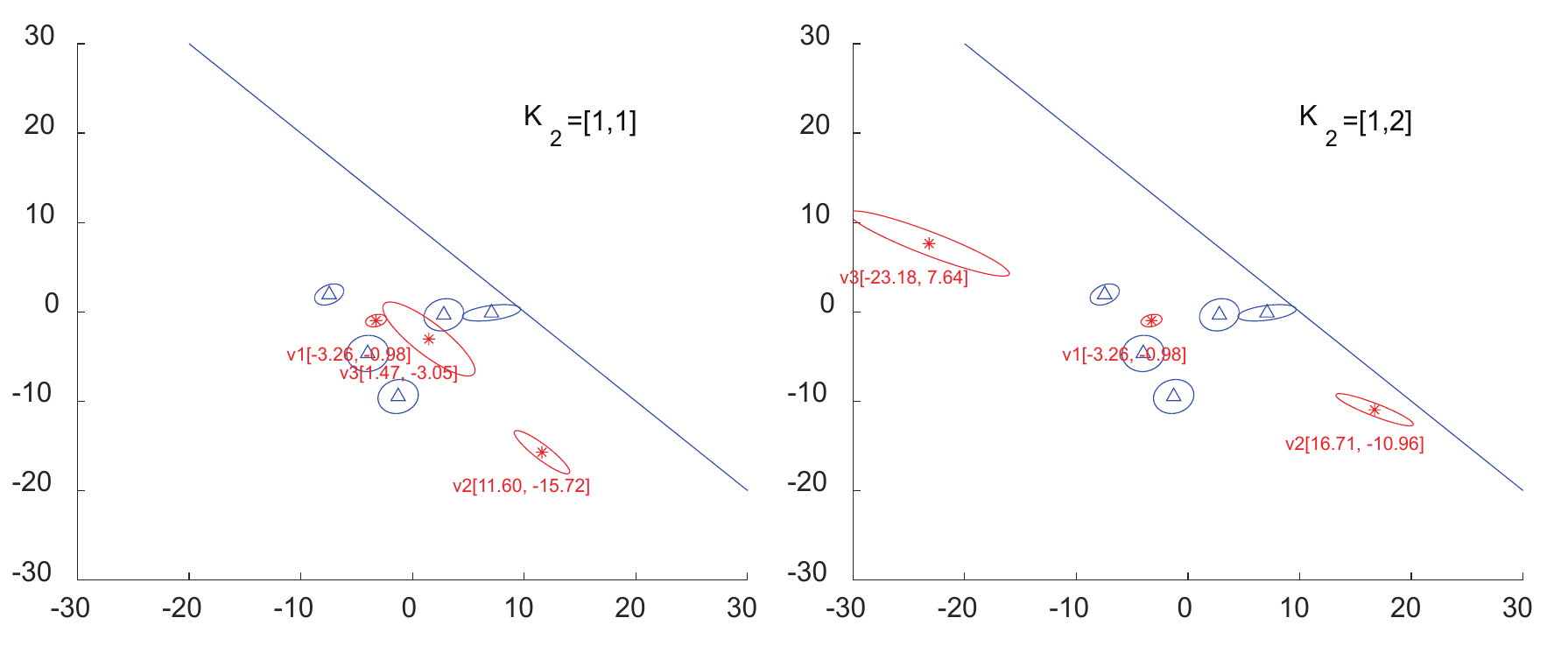}
  \caption{Five landmarks and corresponding top three virtual marks for a boundary. The coordinate of the three virtual marks are shown. The left case and the right case have different feedback matrices $K$ which leads to different $\bar{K}_2$.}
  \label{fig:LME2}
\end{figure}

\subsection{Virtual Landmarks for a Given Controller}
Fig.~\ref{fig:LME2} shows the result of using LME to obtain the top three virtual landmarks for two different controllers $K$ (an the corresponding $\bar{K}_2$). As expected, the first landmark is the same between the two cases, and has the smallest covariance.
%The virtual landmark $v_1$ is the first, and is independent of the controller $K$. The others are the second and third virtual landmarks, which depend on the controller $K$ through . In Figure,\ref{fig:LME2}, the left side case and right side case have different $\bar{K}_2$ which leads to the different second virtual landmark.
%From the figure, we can see that the first virtual landmark has lower variance when compared to other physical landmarks.
The remaining virtual landmarks' covariance is minimized in a specific direction which is determined by the direction of the boundary and the controller together. %For the case of $\bar{K}_2 = [1,1]$, the maximum eigenvalue of the covariance matrix for different virtual landmarks are $0.08, 1.44, 4.11, 7.40, 5246.57$, respectively. % (from the first to the fifth virtual landmark).
%In Figure \ref{fig:LME2}, for the purpose of clearness, only the top three virtual landmarks are shown.

\subsection{Effectiveness of Virtual-Landmark-Based Control}

\begin{figure}[htbp]
  \centering
  \includegraphics[width=\linewidth,trim=0mm 0mm 0mm 10mm, clip]{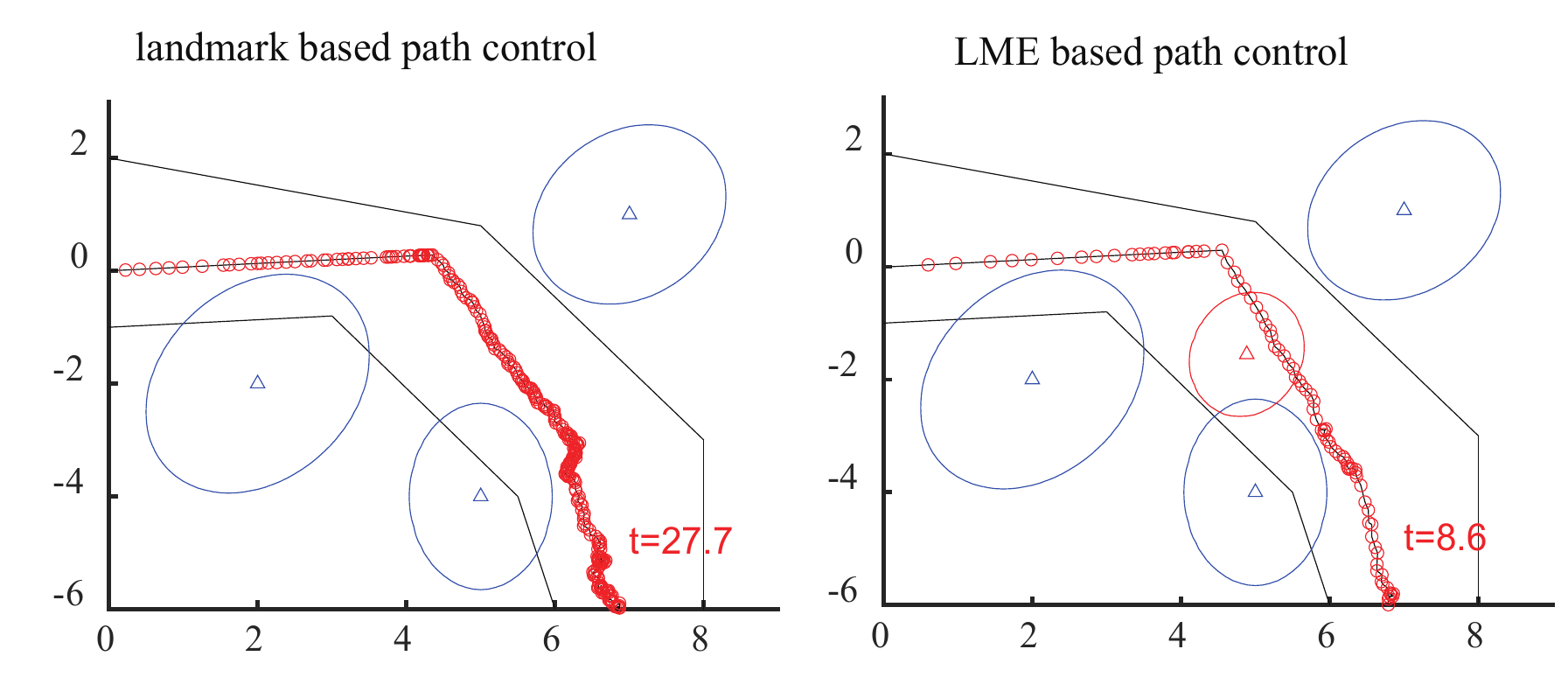}
  \caption{Simulation results with control synthesis based on all physical landmarks (left pane) and one virtual landmark (right pane). The robot starts at the origin $[0,0]$. The time to reach the exit face is indicated in red.}
  \label{fig:LMEvsLandmark}
  % \rtron{Needs example with more noise to show that the noise in the control is reduced. Also, since time is important, plot x,y as a function of time}

  % \includegraphics[width=\linewidth]{figure/plots}
\end{figure}

Fig.~\ref{fig:LMEvsLandmark} compares control synthesis with physical landmarks and a single virtual landmark ($l=1$) in a corridor-like environment. The simulation is based on a robot with single-integrator dynamics $\dot{x}=u$ and uses an Euler–Maruyama scheme with time step \unit[$0.1$]{s}. The robot inputs to the robot are represented by the relative displacement to the landmarks; for the sake of simplicity, we consider constant covariance matrices for the measurement noise  (i.e., which does not depend on time and position); note that, in this simulation, the covariance is large enough to be comparable to the width of the corridor. At each time step, the input $u$ is calculated using the controller $K$ computed from \eqref{eq:K opt dual}.

The controller that uses physical landmarks successfully avoids collisions with the walls, but the trajectory exhibits some chatter, and the control actions are more conservative, leading to a total time of \unit[$27.7$]{s} to exit the corridor.
The controller that uses the first virtual landmark (whose covariance is around one-third of each one of the physical landmarks) results in a trajectory with less jitter, with more aggressive control actions, reducing the total travel time to \unit[$8.6$]{s}.

\section{CONCLUSIONS} \label{section6}
In this paper, we propose the Linear Multiple Estimation (LME) method for landmark-based path planning and control, which generates uncorrelated virtual landmarks based on physical landmarks. %These virtual landmarks can be seen as a set of the orthogonal basis for the physical landmark space. Since all virtual landmarks are optimized one by one, the first virtual has the least covariance. At the same time, the noise of different physical landmarks is collected into the last virtual landmark. Hence, we suggest only utilizing the top $k$ virtual landmarks in practice.
The analytical results show that the first virtual landmark computed with LME is a weighted average of the physical landmarks with weights that are independent of the controller (which can then be designed at the same time) and the environment. Successive (statistically independent) landmarks instead depend on the controller used and the environment; nonetheless, the LME formulation provides a way to compute optimal weights for a given controller.

The numerical experiments shows that, even when a single landmark is used, the proposed strategy significantly reduces the effect of noise and improves the convergence speed.

Avenues for future work include a rigorous experimental evaluation on a real robot, using Kalman filters for the measurements and their time-variant covariance estimates, using LME to adjust the weights in real time.

\bibliographystyle{biblio/ieee}

\bibliography{bib/acc2021,bib/acc2022,%
  biblio/OtherFull,biblio/IEEEFull,biblio/IEEEConfFull,%
  biblio/tron,biblio/controlCBFs,biblio/controlCLF,biblio/planning,%
  biblio/slam%
}

%%%%%%%%%%%%%%%%%%%%%%%%%%%%%%%%%%%%%%%%%%%%%%%%%%%%%%%%%%%%%%%%%%%%%%%%%%%%%%%%
\appendix
\subsection{Derivation of~\eqref{eq:K opt dual}}\label{sec:appendix K opt}
Eq.~\eqref{eq:K opt} can be rewritten by transforming the constraint $\forall x\in\cX$ into a worst-case analysis (the process is similar to the one used in~\cite{Bahreinian:ACC21}), leading to:
\begin{equation}
  \begin{aligned}
    \min_\vK&  \norm{\vK}^2_2\\
    \subjectto & \left[\begin{aligned}
        \max_{x} & -\bar{K}_1x\\
        \subjectto & A_xx+b_x\leq 0,
      \end{aligned} \right]\leq -\bar{K}_2\Gamma \bar{K}_2\transpose+\bar{k}_3
  \end{aligned}
\end{equation}

Taking the dual of the inner optimization problem, we obtain
\begin{equation}
  \begin{aligned}
    \min_\vK&  \norm{\vK}^2_2\\
    \subjectto & \left[\begin{aligned}
        \min_{\lambda} & b_x\transpose x\\
        \subjectto &  A_x\transpose\lambda=-\bar{K}_1 ,\\
        &\lambda\geq 0
      \end{aligned} \right]\leq -\bar{K}_2\Gamma \bar{K}_2\transpose+\bar{k}_3
  \end{aligned}
\end{equation}

Combining the minimization problems and rearranging we obtain~\eqref{eq:K opt dual} as desired.
\subsection{Proof of Prop.~\ref{prop:weights first}}\label{sec:appendix weights}

The Lagrangian of the constrained optimization \eqref{op1} is
\begin{multline}
    L=\bar{K}_2\sum_i{\left( W_i\Sigma _iW_{i}\transpose  \right)}\bar{K}_2\transpose +tr\left( \Lambda\transpose\left( \sum_i{W_i}-I \right) \right)\\
    =\sum_i{\bar{K}_2W_i\Sigma _iW_{i}\transpose\bar{K}_2\transpose +tr\left( \Lambda\transpose W_i \right)}-tr\left( \Lambda\transpose\right) .
\end{multline}
Imagining for the moment that $W$ is a function of time, the total derivative of $L$ is
\begin{multline}
  \dot{L}=\sum_i{\bar{K}_2\dot{W}_i\Sigma _iW_{i}\transpose\bar{K}_2\transpose +\bar{K}_2W_i\Sigma _i\dot{W}_{i}\transpose\bar{K}_2\transpose +tr\left( \Lambda\transpose\dot{W}_i \right)}\\
  =\sum_i{\trace\left( \left( 2\Sigma _iW_{i}\transpose\bar{K}_2\transpose\bar{K}_2+\Lambda\transpose\right) \dot{W}_i \right)}
\end{multline}
Extracting the gradient as the coefficient of $\dot{W}$, taking its transpose, and setting it to zero we have
\begin{equation}
  0=\left(\frac{\partial L}{\partial W_i}\right)\transpose\!\!\!\!=\left(2\Sigma _iW_{i}\transpose\bar{K}_2\transpose\bar{K}_2+\Lambda\transpose\right)\transpose
  \!\!=2\bar{K}_2\transpose\bar{K}_2W_i\Sigma _i+\Lambda,
  \end{equation}
  from which we can obtain an expression for $W_i$ as a function of the multiplier $\Lambda$
  \begin{equation}
    W_i=-\frac{1}{2}\left( \bar{K}_2\transpose\bar{K}_2 \right) ^{\dagger}\Lambda \Sigma _{i}^{-1}.
\end{equation}
If $ \bar{K}_2\transpose\bar{K}_2$ is singular, we need to add a penalty $\varepsilon \sum_i{W_iW_{i}\transpose }$ into the objective function. Then, we have
\begin{equation} \notag
  W_i=-\frac{1}{2}\left( \bar{K}_2\transpose\bar{K}_2 + \varepsilon I \right) ^{-1 }\Lambda \Sigma _{i}^{-1}.
\end{equation}

Substituting back in the constraint
\begin{equation} \notag
    I=\sum_i{W_i}= -\frac{1}{2}\left( \bar{K}_2\transpose\bar{K}_2+\varepsilon I \right) ^{-1}\Lambda \sum_i{\Sigma _{i}^{-1}}
  \end{equation}
  we can obtain a solution for $\Lambda$
\begin{equation}
    \Lambda =-2\left( \bar{K}_2\transpose\bar{K}_2+\varepsilon I \right) \left( \sum_i{\Sigma _{i}^{-1}} \right) ^{-1}
\end{equation}

Thus, the optimal weight $\hat{W}_i$ is give by
\begin{equation} \notag
  \hat{W}_i=\left( \sum_i{\Sigma _{i}^{-1}} \right) ^{-1}\Sigma _{i}^{-1}.
\end{equation}
Note that this result is consistent with standard results from Weighted Least Squares.

\section{Solution of the Optimization Problem \eqref{op2}}\label{sec:appendix LME}
Let $P_{i}^{\left( 1 \right)}=\Sigma _iW_{i}^{1}, ..., P_{i}^{\left( k-1 \right)}=\Sigma _iW_{i}^{k-1}$.
The Lagrangian function of~\eqref{op2} is given by
\begin{equation} \notag
  \begin{aligned}
    L&=\bar{K}_2 \sum_i{W_{i}^{k}\Sigma _iW_{i}^{kT}}\bar{K}_2\transpose+tr\left( \Lambda\transpose\left( \sum_i{W_{i}^{k}}-I \right) \right) \\
    &+tr\left( \varGamma _{1}\transpose\sum_i{P_{i}^{\left( 1 \right) T}W_{i}^{k}} \right) +\cdots \\
    &+tr\left( \varGamma _{k-1}\transpose\sum_i{P_{i}^{\left( k-1 \right) T}W_{i}^{k}} \right) \\
    \dot{L}&=\sum_i \bar{K}_2\dot{W}_{i}^{k}\Sigma _iW_{i}^{kT}\bar{K}_2\transpose +W_{i}^{k}\Sigma _i\dot{W}_{i}^{kT}+tr\left( \Lambda\transpose\dot{W}_{i}^{k} \right) \\
    &+tr\left( \varGamma _{1}\transpose P_{i}^{\left( 1 \right) T}\dot{W}_{i}^{k} \right) +\cdots +tr\left( \varGamma _{k-1}\transpose P_{i}^{\left( k-1 \right) T}\dot{W}_{i}^{k} \right) \\
    &=\sum_i{t}r((2\Sigma _iW_{i}^{kT}\bar{K}_2\transpose\bar{K}_2+\Lambda\transpose+\varGamma _{1}\transpose P_{i}^{(1)T}+\cdots
    \\
    & +\varGamma _{k-1}\transpose P_{i}^{(k-1)T})\dot{W}_{i}^{k})
  \end{aligned}
\end{equation}

Thus, the partial derivatives are
\begin{equation} \notag
  \begin{aligned}
    &\frac{\partial L}{\partial W_{i}^{k}}=2\Sigma _iW_{i}^{kT}\bar{K}_2\transpose\bar{K}_2+\Lambda\transpose+\varGamma _{1}\transpose P_{i}^{\left( 1 \right) T}+\cdots \\
    &\qquad\qquad\qquad\qquad \qquad\qquad+\varGamma _{k-1}\transpose P_{i}^{\left( k-1 \right) T}=0\\
    &2\bar{K}_2\transpose\bar{K}_2W_{i}^{k}\Sigma _i+\Lambda +P_{i}^{\left( 1 \right)}\varGamma _1+\cdots \\
    &\qquad\qquad\qquad\qquad\qquad\qquad +P_{i}^{\left( k-1 \right)}\varGamma _{k-1}=0\\
    &W_{i}^{k}=-\frac{1}{2}(\bar{K}_2\transpose\bar{K}_2)^{-1}(\Lambda +P_{i}^{(1)}\varGamma _1+\cdots\\
    &\qquad\qquad\qquad\qquad\qquad\qquad +P_{i}^{(k-1)}\varGamma _{k-1})\Sigma _{i}^{-1}
  \end{aligned}
\end{equation}

To find the parameter, we have \eqref{apx_f1}.

\begin{figure*}[htbp]
  \centering
\begin{equation} \label{apx_f1}
  \begin{aligned}
    \sum_i{W_{i}^{k}}=I\Longrightarrow \left( K_2 ^TK_2 \right) ^{-1}\sum_i{\left( \Lambda +P_{i}^{\left( 1 \right)}\varGamma _1+\cdots +P_{i}^{\left( k-1 \right)}\varGamma _{k-1} \right) \Sigma _{i}^{-1}}=-2I
    \\
    \sum_i{P_{i}^{\left( 1 \right) T}W_{i}^{k}=0}\Longrightarrow \sum_i{P_{i}^{\left( 1 \right) T}\left( K_2 ^TK_2 \right) ^{-1}\left( \Lambda +P_{i}^{\left( 1 \right)}\varGamma _1+\cdots +P_{i}^{\left( k-1 \right)}\varGamma _{k-1} \right) \Sigma _{i}^{-1}}=0
    \\
    \vdots \qquad\qquad\qquad\qquad\qquad\qquad\qquad\qquad\qquad\qquad
    \\
    \sum_i{P_{i}^{\left( k-1 \right) T}W_{i}^{k}=0}\Longrightarrow \sum_i{P_{i}^{\left( k-1 \right) T}\left( K_2 ^TK_2 \right) ^{-1}\left( \Lambda +P_{i}^{\left( 1 \right)}\varGamma _1+\cdots +P_{i}^{\left( k-1 \right)}\varGamma _{k-1} \right) \Sigma _{i}^{-1}}=0
  \end{aligned}
\end{equation}
    \begin{equation}\label{eq:sums}
    A \defeq
    \left[ \begin{matrix}
	\sum_i{C_i\otimes H^{-1}}&		\sum_i{C_i\otimes \left( H^{-1}P_{i}^{\left( 1 \right)} \right)}&		\dots&		\sum_i{C_i\otimes \left( H^{-1}P_{i}^{\left( k-1 \right)} \right)}\\
	\sum_i{C_i\otimes \left( P_{i}^{\left( 1 \right) T}H^{-1} \right)}&		\sum_i{C_i\otimes \left( P_{i}^{\left( 1 \right) T}H^{-1}P_{i}^{\left( 1 \right)} \right)}&		\dots&		\sum_i{C_i\otimes \left( P_{i}^{\left( 1 \right) T}H^{-1}P_{i}^{\left( k-1 \right)} \right)}\\
	\vdots&		\vdots&		\vdots&		\vdots\\
	\sum_i{C_i\otimes \left( P_{i}^{\left( k-1 \right) T}H^{-1} \right)}&		\sum_i{C_i\otimes \left( P_{i}^{\left( k-1 \right) T}H^{-1}P_{i}^{\left( 1 \right)} \right)}&		\dots&		\sum_i{C_i\otimes \left( P_{i}^{\left( k-1 \right) T}H^{-1}P_{i}^{\left( k-1 \right)} \right)}\\
\end{matrix} \right]
\end{equation}
\rule{0.9\textwidth}{0.71pt}
\end{figure*}

%%% Local Variables:
%%% mode: latex
%%% TeX-master: "2022-cdc-lpVirtualLandmark"
%%% End:

Let $H=\bar{K}_2\transpose\bar{K}_2$, $C_i=\Sigma _{i}^{-1}$, we get
\begin{equation} \notag
  Ax=b
\end{equation}

\begin{equation*}
  A \defeq \eqref{eq:sums} \quad
  x\defeq \left[ \begin{array}{c}
                   \mathrm{vec}\left( \Lambda \right)\\
                   \mathrm{vec}\left( \varGamma _1 \right)\\
                   \vdots\\
                   \mathrm{vec}\left( \varGamma _{k-1} \right)\\
                 \end{array} \right] \quad
               b\defeq \left[ \begin{array}{c}
                                -2\mathrm{vec}\left( I \right)\\
                                0\\
                                \vdots\\
                                0\\
                              \end{array} \right]
                          \end{equation*}

                          Solving the linear equations, we can obtain the parameter $\mathrm{vec}(\Lambda)$,$\mathrm{vec}(\varGamma_1)$,...,$\mathrm{vec}(\varGamma_k-1)$.

                          Then, we have
                          \begin{equation} \label{apxf2}
                            \begin{aligned}
                              W_{i}^{k}&=-\frac{1}{2}\left( \bar{K}_2\transpose\bar{K}_2 \right) ^{-1}(\Lambda +P_{i}^{\left( 1 \right)}\varGamma _1\\
                              &\qquad\qquad\qquad +\cdots +P_{i}^{\left( k-1 \right)}\varGamma _{k-1})\Sigma _{i}^{-1}
                            \end{aligned}
                          \end{equation}

                          If $\bar{K}_2\transpose  \bar{K}_2 $ is singular, we need to add a small penalty $\varepsilon \sum_i{W_iW_{i}\transpose }$ into the objective function to avoid singular solutions. Then, the result is

                          \begin{equation} \label{apxf3}
                            \begin{aligned}
                              W_{i}^{k}=&-\frac{1}{2}\left( \bar{K}_2\transpose\bar{K}_2+\epsilon I \right) ^{-1}(\Lambda +P_{i}^{\left( 1 \right)}\varGamma _1\\
                              &\qquad\qquad\qquad +\cdots +P_{i}^{\left( k-1 \right)}\varGamma _{k-1})\Sigma _{i}^{-1},
                            \end{aligned}
                          \end{equation}
                          where $\mathrm{vec}(\Lambda),\mathrm{vec}(\varGamma_1),\cdots,\mathrm{vec}(\varGamma_{k-1})$ are also changed with $H=\bar{K}_2\transpose\bar{K}_2 +\epsilon I$.

\end{document}